\theoremstyle{plain}
\newtheorem{theorem}{Theorem}[section]
\newtheorem{lemma}[theorem]{Lemma}
\newtheorem{corollary}[theorem]{Corollary}
\theoremstyle{definition}
\newtheorem{remark}[theorem]{Remark}
\theoremstyle{remark}
\begin{document}
\def \CPb {\overline{\mathbf{CP}}^{\,2}}
\def \CP {{\mathbf{CP}}^{\,2}}
\def \Sig{\Sigma}

\def \a {\alpha}
\def \b {\beta}
\def \G {\Gamma}

\def \oa {\overline{a}}
\def \ob {\overline{b}}
\def \og {\overline{\gamma}}

 \title{On Sections of genus two Lefschetz fibrations}

 \author{S\.{i}nem \c{C}el\.{i}k Onaran}

 \address{Department of Mathematics, Middle East Technical University, Ankara, Turkey and School of Mathematics, Georgia Institute of Technology, Atlanta, Georgia.} \email{sinemo@math.gatech.edu, e114485@metu.edu.tr}
  \subjclass{57M50, 57R17}
\keywords{Lefschetz fibrations, sections of Lefschetz fibrations}

\begin{abstract} In this note we find new relations in the mapping class group of a genus $2$ surface with $n$ boundary components for $n = 1,\ldots, 8$ which induce a genus $2$ Lefschetz fibration $\CP\#13\CPb \rightarrow S^{2}$ with $n$ disjoint sections. As a consequence, we show any holomorphic genus $2$ Lefschetz fibration without separating singular fibers admits a section.
\end{abstract}
 \maketitle

 \setcounter{section}{0}

\section{Introduction} 

\par The study of Lefschetz fibrations is important in low dimensional topology because of a close relationship between symplectic $4$-manifolds and Lefschetz fibrations, \cite{Don}, \cite{GS}. Also, sections of Lefschetz fibrations play an important role in the theory. For example, in the presence of a section the fundamental group and the signature of a Lefschetz fibration can be easily computed.

\par This paper provides sections for genus $2$ Lefschetz fibration $\CP\#13\CPb \rightarrow S^{2}$ with global monodromy given by the relation $(t_{c_1}t_{c_2}t_{c_3}t_{c_4}t_{c_5}^2t_{c_4}t_{c_3}t_{c_2}t_{c_1})^2=1$ in the mapping class group $\G_{2}$ of a closed genus $2$ surface where each $c_i$ is a simple closed curve as in Figure ~ \ref{f3} and $t_{c_i}$ is a right handed Dehn twist about $c_i$, $i = 1, \ldots, 5$. We want to remark that in \cite{KO}, similar relations found in mapping class group $\G_{1, n}$ of a genus $1$ surface with $n$ boundary components for $n = 4,\ldots, 9$ giving an elliptic Lefschetz fibration $\CP\#9\CPb \rightarrow S^{2}$ with $n$ disjoint sections.
 \par In the following section, we recall definitions and some relations in the mapping class group used in our computations. We fix notations to be used throughout the paper. In Section 3, we give brief background information on Lefschetz fibrations. In Section 4, we provide the necessary relations in the mapping class group $\G_{2,n}$ for a genus $2$ Lefschetz fibration $\CP\#13\CPb \rightarrow S^{2}$ with $n$ disjoint sections, $n = 1,\ldots, 6$. Finally, in Section 5 we list several observations and open problems related to the sections of Lefschetz fibrations. We show that a genus $2$ Lefschetz fibration $\CP\#13\CPb \rightarrow S^{2}$ may admit at most $12$ disjoint sections. We provide relations in the corresponding mapping class group that give $n = 7, 8$ disjoint sections for genus $2$ Lefschetz fibration $\CP\#13\CPb \rightarrow S^{2}$. We conclude that any holomorphic genus $2$ Lefschetz fibration without separating singular fibers admits a section.

\section{Mapping Class Groups} 
\par Let $\Sigma_{g, n}^k$ denote oriented, connected genus $g$ surface with $n$ boundary components and $k$ marked points. \emph{The mapping class group} of $\Sigma_{g, n}^k$ is defined as the isotopy classes of orientation preserving self diffeomorphisms of $\Sigma_{g, n}^k$ which are assumed to fix the marked points and the points on the boundary. Denote the mapping class group of $\Sigma_{g, n}^k$ by $\G_{g, n}^k$. When $k=0$, denote the mapping class group of $\Sigma_{g, n}$ by $\G_{g, n}$. 

\par Let $a$ be a simple closed curve on $\Sigma_{g, n}^k$. A \emph{right handed Dehn twist} $t_a$ about $a$ is the isotopy class of a self diffeomophism of  $\Sigma_{g, n}^k$ obtained by cutting $\Sigma_{g, n}^k$ along $a$ and gluing back after twisting one side by $2\pi$ to the right. The inverse of a right handed Dehn twist is a \emph{left handed Dehn twist}, denoted by $t_a^{-1}$.

Let us now briefly mention the facts and the relations that we will use in our computations. For the proofs see \cite{BM} and \cite{I}. Let $a$ be a simple closed curve on $\Sigma_{g, n}^k$. If $f$: $\Sigma_{g, n}^k \rightarrow \Sigma_{g, n}^k$ is an orientation preserving diffeomophism, then $ft_af^{-1} = t_{f(a)}$.

\par For simplicity in the rest of the paper we denote a right handed Dehn twist $t_a$ along $a$ by $a$ and a left handed Dehn twist $t_a^{-1}$ by $\oa$. The product $ab$ means we first apply the Dehn twist $b$ then the Dehn twist $a$. We also remark that by a boundary curve in a given surface with boundary we mean a simple closed curve parallel to the boundary component of the given surface.
\par The following relations will be useful in our computations:
\par If $a$ and $b$ are two disjoint simple closed curves on  $\Sigma_{g, n}^k$, then we have the \emph{commutativity relation}. Dehn twists along $a$ and $b$ commute; $ab = ba$.
\par If $a$ and $b$ are two simple closed curves on $\Sigma_{g, n}^k$ intersecting transversely at a single point, then the corresponding Dehn twists satisfy the \emph{braid relation}; $aba = bab$.
\par Consider a sphere with four holes and the boundary curves $\delta_1, \delta_2, \delta_3, \delta_4$ and the simple closed curves $\a, \gamma,\sigma$ as shown in Figure ~ \ref{f1}. Then, there is a relation first discovered by Dehn and then rediscovered and named as \emph{lantern relation} by D. Johnson:
\begin{equation*}
\delta_1\delta_2\delta_3\delta_4 = \gamma\sigma\a.
\end{equation*}
\par Let $\Sigma_{1, 3}$ be a torus with three boundary curves $\delta_1, \delta_2, \delta_3$, then in $\G_{1, 3}$ we have the following relation called \emph{star relation} in \cite{G}. For the simple closed curves $a_1, a_2, a_3, b$, see Figure ~ \ref{f1}.
\begin{equation*}
\delta_1\delta_2\delta_3 = (a_1a_2a_3b)^3.
\end{equation*}
\begin{figure}[h!]
 \begin{center}
    \includegraphics[width=13cm]{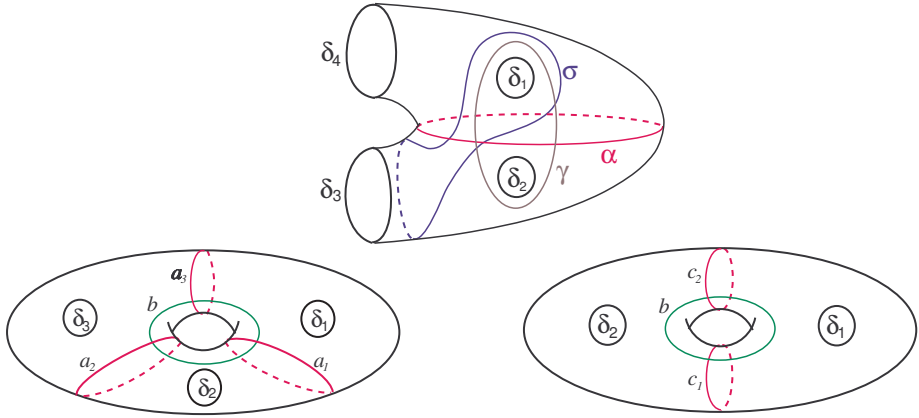}
  \caption{The Lantern relation, $\delta_1\delta_2\delta_3\delta_4 = \gamma\sigma\a$. The Star relation, $\delta_1\delta_2\delta_3 = (a_1a_2a_3b)^3$ and The two-holed torus relation, $\delta_1\delta_2 = (c_1bc_2)^4$.}
  \label{f1}
   \end{center}
 \end{figure}
\par Consider a torus $\Sigma_{1, 2}$ with two boundary curves $\delta_1, \delta_2$ and the simple closed curves $c_1, c_2, b$ as shown in Figure~ \ref{f1}. Then we have the following \emph{chain relation} for two holed torus:
\begin{equation*}
\delta_1\delta_2 = (c_1bc_2)^4.
\end{equation*}
\par We also need the \emph{chain relations} for genus $2$ case. If $c_1, c_2, c_3, c_4, c_5$ are the chain of curves as shown in Figure ~ \ref{f2}, then for 
a genus $2$ surface $\Sigma_{2, 1}$ with one boundary curve $\delta_1$, we have
\begin{equation*}
\delta_1 = (c_1c_2c_3c_4)^{10}
\end{equation*}
and for a genus $2$ surface $\Sigma_{2, 2}$ with two boundary curves $\delta_1, \delta_2$, we have 
\begin{equation*}
\delta_1\delta_2 = (c_1c_2c_3c_4c_5)^{6}.
\end{equation*}
\begin{figure}[hbt]
\begin{center}
    \includegraphics[width=14cm]{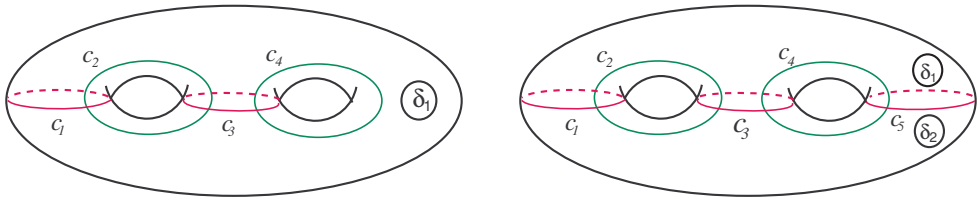}
  \caption{The chain relations: $\delta_1 = (c_1c_2c_3c_4)^{10}$ and $\delta_1\delta_2 = (c_1c_2c_3c_4c_5)^{6}$.}
  \label{f2}
\end{center}
 \end{figure}
\section{Lefschetz Fibrations}
\par A \emph{Lefschetz fibration} on a closed, connected, oriented smooth four manifold $X$ is a map $ f: X  \rightarrow   \Sig$, where $\Sigma$ is a closed, connected, oriented smooth surface, such that $f$ is surjective, $f$ has isolated critical points and for each critical point $p$ of $f$ there is an orientation preserving local complex coordinate chart on which $f$ takes the form $f(z_{1}, z_{2}) = {z_{1}^{2}} + {z_{2}^{2}}$.   
\par The Lefschetz fibration $f$ is a smooth fiber bundle away from critical points. A regular fiber of $f$ is diffeomorphic to a closed, oriented smooth genus $g$ surface for some $g$ and we define the \emph{genus} of the Lefschetz fibration as the genus of the regular fiber. 
\par A \emph{singular fiber} is a fiber containing a critical point. We assume that each singular fiber contains only one critical point. The singular fiber can be obtained by taking a simple closed curve on a regular fiber and shrinking it to a point. This simple closed curve describing the singular fiber is called the \emph{vanishing cycle}. If this curve is a nonseparating curve, then the singular fiber is called \emph{nonseparating}, otherwise it is called \emph{separating}. For a genus $g$ Lefschetz fibration over $S^{2}$, the product of Dehn twists along the vanishing cycles gives us the \emph{global monodromy} of the Lefschetz fibration.
\par The right handed Dehn twists $c_i$ along the simple closed curves $c_i$, $i = 1, \ldots, s$, on a closed surface $\Sigma_{g}$ with the relation 
\begin{equation*}
c_1c_2\cdots c_s = 1
\end{equation*}
defines a genus $g$ Lefschetz fibration over $S^{2}$ with the vanishing cycles $c_1, \ldots, c_s$. In particular, in $\G_{2}$ the following relations hold: 
\begin{eqnarray*}
(c_1c_2c_3c_4c_5^2c_4c_3c_2c_1)^2 &=& 1,\\
(c_1c_2c_3c_4c_5)^6 &=& 1,\\
(c_1c_2c_3c_4)^{10} &=& 1
\end{eqnarray*}
where $c_1,\ldots, c_5$ are the simple closed curves as in Figure ~ \ref{f3}. For each relation above we have genus $2$ Lefschetz fibrations over $S^{2}$  with total spaces $\CP\#13\CPb$, $K3\#2\CPb$ and the Horikawa surface $H$, respectively. For details on Lefschetz fibrations see \cite{A}, \cite{GS}.
\begin{figure}[hbt]
\begin{center}
    \includegraphics[width=11cm]{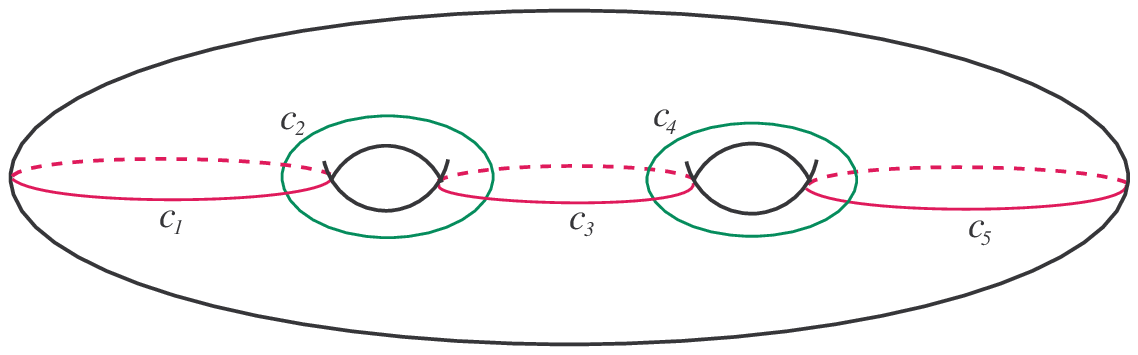}
  \caption{$\Sigma_{2}$}
  \label{f3}
\end{center}
 \end{figure}
\par A section of a Lefschetz fibration is a map $\sigma:\Sigma\rightarrow X$ such that $f\sigma = id_{\Sigma}$. Consider a collection of simple closed curves $c_1, \ldots, c_s$ on a genus $g$ surface $\Sigma_{g, n}$ with the following relation in $\G_{g,n}$
\begin{equation*}
c_1\cdots c_s = \delta_1\cdots \delta_n
\end{equation*}
where $\delta_1,\ldots, \delta_n$ are the boundary curves. This relation defines a genus $g$ Lefschetz fibration over $S^{2}$ admitting $n$ disjoint sections with the global monodromy $c_1\cdots c_s = 1$. To see this, note that after gluing a disk along each boundary curve one gets the relation $c_1\cdots c_s = 1$ in $\G_{g}$. Thus, this relation will give us a genus $g$ Lefschetz fibration over $S^{2}$ as before. One can use the centers of the capping disks to construct sections.
\par In the following sections, we find relations of above type in the mapping class group $\G_{2,n}$ for $n = 1,\ldots, 8$. These relations give us a genus $2$ Lefschetz fibration \;$\CP\#13\CPb \rightarrow S^{2}$ with $n$ disjoint sections for $n = 1,\ldots, 8$.

\section{Relations in $\G_{2,n}$}
For each $n = 1,\ldots, 6$, we write the product of right handed Dehn twists along the boundary curves $\delta_1,\ldots, \delta_n$ as a product of twenty right handed Dehn twists along non-boundary parallel simple closed curves on a genus $2$ surface $\Sigma_{2,n}$ with $n$ holes. Namely, we provide relations of the form $\delta_1 \cdots \delta_n = \beta_1 \cdots \b_{20}$, where $\beta_1, \ldots, \beta_{20}$ are non-boundary parallel simple closed curves on $\Sigma_{2,n}$. After gluing disks along the boundary curves $\delta_1,\ldots, \delta_n$, we get the relation $1 = \b_1\cdots\b_{20}$ in $\G_{2}$. By using the commutativity relation and the braid relation, one can simplify the right hand side of the equation $ 1 = \beta_1 \cdots \b_{20}$ so that it gives us the global monodromy $(c_1c_2c_3c_4c_5^2c_4c_3c_2c_1)^2 = 1$ of a Lefschetz fibration $\CP\#13\CPb \rightarrow S^{2}$ where $c_1, \ldots, c_5$ are simple closed curves as in Figure ~ \ref{f3}. Thus, these relations we find in $\G_{2,n}$ for $n = 1,\ldots, 6$ in the following subsections give us a genus $2$ Lefschetz fibration $\CP\#13\CPb \rightarrow S^{2}$ admitting $n$ disjoint sections.
\subsection{Genus two surface with one hole} Consider the genus $2$ surface $\Sigma_{2,1}$ with one boundary curve $\delta_1$ as in Figure ~ \ref{f4}. We have
\begin{equation*}
\delta_1 = (a_1b_1a_2b_2)^{10} = (a_1b_1a_2b_2)^5(a_1b_1a_2b_2)^5.
\end{equation*}
Using the commutativity relation and the braid relation, one can show
\begin{equation*}
(a_1b_1a_2b_2)^5 = (a_1b_1a_2)^4(b_2a_2b_1a_1^2b_1a_2b_2).
\end{equation*}
Notice the embedded two holed torus with two boundary curves $a_3, a_4$ in $\Sigma_{2,1}$. Then, by the chain relation for torus, we have $(a_1b_1a_2)^4 = a_3a_4$.
\par By combining the above relations, we get
\begin{equation}
\begin{array}{ccl}
\delta_1 &=& (a_1b_1a_2)^4(b_2a_2b_1a_1^2b_1a_2b_2)(a_1b_1a_2)^4(b_2a_2b_1a_1^2b_1a_2b_2) \nonumber \\ &=&  a_3a_4(b_2a_2b_1a_1^2b_1a_2b_2)a_3a_4(b_2a_2b_1a_1^2b_1a_2b_2) \\ &=& 
(a_3a_4b_2a_2b_1a_1^2b_1a_2b_2)^2.
\end{array}
\end{equation}
\begin{figure}[hbt]
\begin{center}
    \includegraphics[width=13cm]{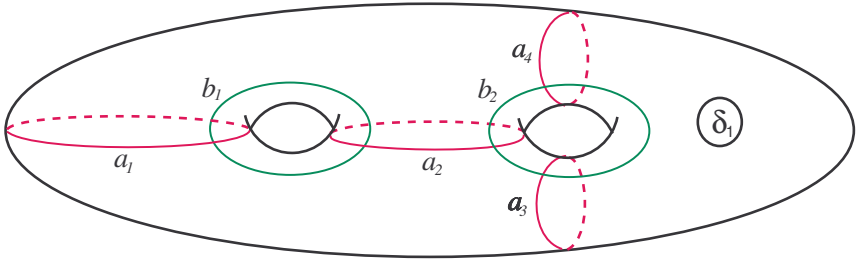}
  \caption{$\Sigma_{2,1}$}
  \label{f4}
\end{center}
 \end{figure}
\subsection{Genus two surface with two holes} Consider the genus $2$ surface $\Sigma_{2,2}$ with two boundary curves $\delta_1, \delta_2$ in Figure 5 and notice the embedded sphere with four boundary curves $\delta_1, \delta_2, a_3, a_4$ in $\Sigma_{2,2}$. Then by the lantern relation, we have
\begin{equation*}
a_3a_4\delta_1\delta_2 = \gamma\sigma a_5.
\end{equation*}
Notice the genus $2$ surface with one boundary curve $\gamma$ in the Figure ~ \ref{f5}. Thus, we have the chain relation $\gamma = (a_1b_1a_2b_2)^{10}$. Now, replacing $\gamma$ in the lantern relation and then replacing the two-holed torus relation $(a_1b_1a_2)^4 = a_3a_4$ in the equation, we get
\begin{equation}
\begin{array}{ccl}
a_3a_4\delta_1\delta_2 &=&  \gamma\sigma a_5 \nonumber \\ &=&  (a_1b_1a_2b_2)^{10}\sigma a_5  \\ &=& (a_1b_1a_2b_2)^5(a_1b_1a_2b_2)^5\sigma a_5  \\ &=& 
(a_1b_1a_2)^4(b_2a_2b_1a_1^2b_1a_2b_2)(a_1b_1a_2)^4(b_2a_2b_1a_1^2b_1a_2b_2) \sigma a_5 \nonumber \\ &=& a_3a_4(b_2a_2b_1a_1^2b_1a_2b_2)a_3a_4(b_2a_2b_1a_1^2b_1a_2b_2) \sigma a_5.
\end{array}
\end{equation}
We simplify the equation as
\begin{equation*}
\delta_1\delta_2 = b_2a_2b_1a_1^2b_1a_2b_2a_3a_4(b_2a_2b_1a_1^2b_1a_2b_2) \sigma a_5.
\end{equation*}
\begin{figure}[hbt]
\begin{center}
    \includegraphics[width=18cm]{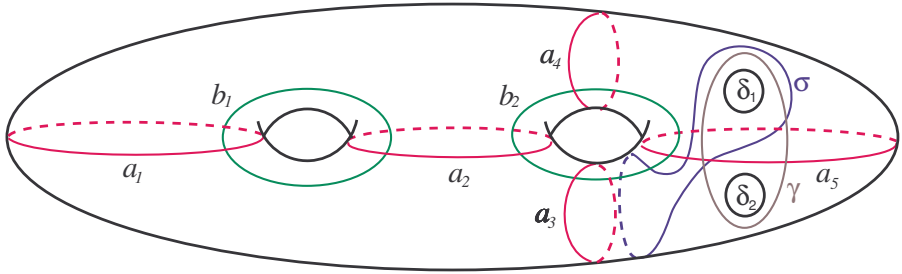}
  \caption{$\Sigma_{2,2}$}
  \label{f5}
\end{center}
 \end{figure}
\subsection{Genus two surface with three holes} First observe the lantern relation for the sphere with four boundary curves $\delta_1, \delta_2, a_4, a_5$ in 
$\Sigma_{2,3}$ in Figure ~ \ref{f6}:
\begin{equation*}
a_4a_5\delta_1\delta_2 = \gamma\sigma a_6.
\end{equation*}
For the three holed torus with the boundary curves $\gamma, a_1, a_2$,  we have the star relation $\gamma a_1a_2 = (a_4a_5a_3b_2)^3$, and for the three holed torus with the boundary curves $\delta_3, a_4, a_5$,  we have $\delta_3a_4a_5 = (a_1a_2a_3b_1)^3.$
\par Now combine the relations. Plug in $\delta_3, \gamma$. Then, simplify by using the commutativity relation and the braid relation. Note that all $a_i$'s commute for $i=1, \ldots, 6$. The simple closed curves $ a_1, a_2, a_3$ intersect $b_1$ transversely at a single point and $a_4, a_5, a_3$ intersect $b_2$ transversely at a single point.
\begin{equation*}
\delta_1\delta_2 = \oa_4\oa_5\gamma\sigma a_6
\end{equation*}
\begin{equation}
\begin{array}{ccl}
\delta_1\delta_2\delta_3 &=& \delta_3\oa_4\oa_5\gamma\sigma a_6 \nonumber \\ &=& \delta_3 \oa_4\oa_5\oa_1\oa_2(a_4a_5a_3b_2)^3\sigma a_6 \nonumber \\ &=& \oa_1\oa_2(\delta_3)\oa_4\oa_5(a_4a_5a_3b_2)(a_4a_5a_3b_2)^2\sigma a_6 \nonumber \\ &=& \oa_1\oa_2((a_1a_2a_3b_1)^3\oa_4\oa_5)a_3b_2(a_4a_5a_3b_2)^2\sigma a_6 \nonumber \\ &=& a_3b_1(a_1a_2a_3b_1)^2a_3(\oa_4\oa_5b_2a_4a_5)a_3b_2 a_4a_5a_3b_2\sigma a_6\nonumber \\ &=& a_3b_1(a_1a_2a_3b_1)^2a_3\beta a_3b_2a_4a_5a_3b_2\sigma a_6
\end{array}
\end{equation}
where $\beta = \oa_5\oa_4b_2a_4a_5$.
\begin{figure}[hbt]
\begin{center}
    \includegraphics[width=16cm]{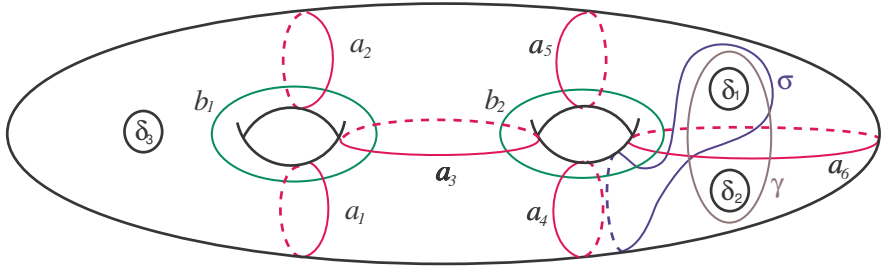}
  \caption{$\Sigma_{2,3}$}
  \label{f6}
\end{center}
 \end{figure}
\subsection{Genus two surface with four holes} Here, we will use the three-holed genus two relation we found in the previous Section 4.3. Notice the genus $2$ surface with three boundary curves $\delta_3, \delta_4, \gamma$ in Figure ~ \ref{f7}. Then,
\begin{equation}
\begin{array}{ccl}
\delta_3\delta_4\gamma &=& a_3b_2(a_5a_4a_3b_2)^2a_3\beta_1 a_3b_1a_2a_1a_3b_1\sigma_1a_7 \nonumber \\ &=& a_3\beta_1 a_3b_1a_2a_1a_3b_1\sigma_1a_7a_3b_2(a_5a_4a_3b_2)^2.
\end{array}
\end{equation}
where $\beta_1 = \oa_1\oa_2b_1a_2a_1$.
Note that, we identify the curves $(a_1, a_2, a_3, a_4, a_5, a_6)$ in $\Sigma_{2,3}$ in Figure ~ \ref{f6} with the curves $(a_5, a_4, a_3, a_2, a_1, a_7)$ in $\Sigma_{2,4}$ in Figure ~ \ref{f7}.
\begin{figure}[hbt]
\begin{center}
    \includegraphics[width=16cm]{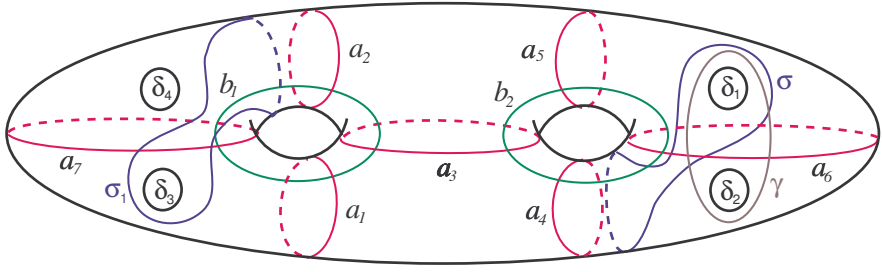}
  \caption{$\Sigma_{2,4}$}
  \label{f7}
\end{center}
 \end{figure}
\\By the lantern relation for the sphere with four boundary curves $\delta_1, \delta_2, a_4, a_5$, we have
\begin{equation*}
a_4 a_5\delta_1\delta_2= \gamma\sigma a_6.
\end{equation*}
Now, combine the above relations,
\begin{equation}
\begin{array}{ccl}
\delta_1\delta_2\delta_3 \delta_4 &=& a_3\beta_1 a_3b_1a_2a_1a_3b_1\sigma_1a_7a_3b_2(a_5a_4a_3b_2)^2\og 
\overline{a}_4\oa_5\gamma\sigma a_6 \nonumber \\ &=& a_3\beta_1 a_3b_1a_2a_1a_3b_1\sigma_1a_7a_3b_2(a_5a_4a_3b_2)(a_3a_5a_4b_2)\oa_4\oa_5\sigma a_6 \nonumber \\ &=& a_3\beta_1 a_3b_1a_2a_1a_3b_1\sigma_1a_7a_3b_2(a_5a_4a_3b_2) a_3\beta_2\sigma a_6.
\end{array}
\end{equation}
where $\beta_1 =\oa_1\oa_2b_1a_2a_1$ and $\beta_2 = a_5a_4b_2\oa_4\oa_5$.
\subsection{Genus two surface with five holes}
The lantern relation for the sphere with four boundary curves $\delta_1, \delta_2, a_4, a_5$ in $\Sigma_{2,5}$ in Figure ~ \ref{f8} is
\begin{equation*}
a_4 a_5\delta_1\delta_2= \gamma\sigma a_6.
\end{equation*}
Notice the genus $2$ surface with four boundary curves $\delta_3, \delta_4, \delta_5, \gamma $ in Figure 8. Identify the curves $(\delta_1, \delta_2, a_5, a_6, \sigma)$ in $\Sigma_{2,4}$ in Figure ~ \ref{f7} with the curves $(\delta_5, \gamma, a_8, a_5, \sigma_2)$ in $\Sigma_{2,5}$ in Figure ~ \ref{f8}. Then, by the relation given in Section 4.4 it follows that 
\begin{equation*}
\delta_3\delta_4\delta_5\gamma = a_3\beta_1 a_3b_1a_2a_1a_3b_1\sigma_1a_7a_3b_2(a_8a_4a_3b_2) a_3\beta_2\sigma_2 a_5.
\end{equation*}
where $\beta_1 = \oa_1\oa_2b_1a_2a_1$, $\beta_2 = a_8a_4b_2\oa_4\oa_8$.
\par Now, combine the above relations and simplify the equation as
\begin{equation}
\begin{array}{ccl}
\delta_1\delta_2\delta_3\delta_4\delta_5 &=& a_3\beta_1 a_3b_1a_2a_1a_3b_1\sigma_1a_7a_3b_2(a_8a_4a_3b_2) a_3\beta_2\sigma_2 a_5\overline{\gamma}            \overline{a}_4\oa_5\gamma\sigma a_6 \nonumber \\ &=& a_3\beta_1 a_3b_1a_2a_1a_3b_1\sigma_1a_7a_3b_2(a_8a_4a_3b_2) a_3\beta_2\sigma_2 \oa_4\sigma a_6 \nonumber \\ &=& a_3\beta_1 a_3b_1a_2a_1a_3b_1\sigma_1a_7a_3(\oa_4)b_2(a_4a_8a_3b_2) a_3\beta_2\sigma_2 \sigma a_6 \nonumber \\ &=& a_3\beta_1 a_3b_1a_2a_1a_3b_1\sigma_1a_7a_3\beta_3(a_8a_3b_2) a_3\beta_2\sigma_2 \sigma a_6 

\end{array}
\end{equation}
where $\beta_1 = \oa_1\oa_2b_1a_1a_2$, $\beta_2 = a_8a_4b_2\oa_4\oa_8$ and $\beta_3 = \oa_4b_2a_4$.
\begin{figure}[hbt]
\begin{center}
    \includegraphics[width=17cm]{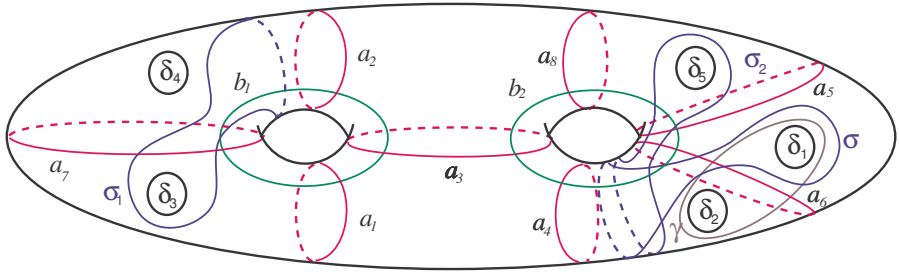}
  \caption{$\Sigma_{2,5}$}
  \label{f8}
\end{center}
 \end{figure}
\subsection{Genus two surface with six holes}
The lantern relation for the sphere with four boundary curves $\delta_1, \delta_2, a_4, a_5$ in $\Sigma_{2,6}$ in Figure ~ \ref{f9} is
\begin{equation*}
a_4 a_5\delta_1\delta_2 = \gamma\sigma a_6.
\end{equation*}
Now, identify the curves $(\delta_1, \delta_2, a_6, a_8, \sigma, \sigma_2)$ in $\Sigma_{2,5}$ in Figure ~ \ref{f8} with the curves $(\delta_6, \gamma, a_5, a_9, \sigma_2, \sigma_3)$ in $\Sigma_{2,6}$ in Figure ~ \ref{f9}. Then, by the relation given in Section 4.5 for the genus $2$ surface with five boundary curves $\delta_3, \delta_4, \delta_5, \delta_6, \gamma $, we have
\begin{equation*}
\delta_6\gamma\delta_3\delta_4\delta_5 = a_3\beta_1 a_3b_1a_2a_1a_3b_1\sigma_1a_7a_3\beta_3a_9a_3b_2 a_3\beta_2\sigma_3 \sigma_2 a_5.
\end{equation*}
where $\beta_1 =\oa_1\oa_2b_1a_2a_1$, $\beta_2 = a_9a_4b_2\oa_4\oa_9$, $\beta_3 = \oa_4b_2a_4$. 
\par Now combine the above relations, we get
\begin{equation}
\begin{array}{ccl}
\delta_1\delta_2\delta_3\delta_4\delta_5\delta_6 &=& a_3\beta_1 a_3b_1a_2a_1a_3b_1\sigma_1a_7a_3\beta_3a_9a_3b_2 a_3\beta_2\sigma_3 \sigma_2 a_5\overline{\gamma} \overline{a}_4\oa_5\gamma\sigma a_6 \nonumber \\ &=& a_3\beta_1 a_3b_1a_2a_1a_3b_1\sigma_1a_7a_3\beta_3a_9a_3b_2 a_3\beta_2\sigma_3 \sigma_2(\oa_4)\sigma a_6 \nonumber \\ &=& a_3\beta_1 a_3b_1a_2a_1a_3b_1\sigma_1a_7a_3(\oa_4)(\beta_3)a_9a_3b_2 a_3\beta_2\sigma_3 \sigma_2\sigma a_6 \nonumber \\ &=& a_3\beta_1 a_3b_1a_2a_1a_3b_1\sigma_1a_7a_3(\oa_4)(\oa_4b_2a_4)a_9(a_3b_2a_3)\beta_2\sigma_3 \sigma_2\sigma a_6 \nonumber \\ &=& a_3\beta_1 a_3b_1a_2a_1a_3b_1\sigma_1a_7a_3(\oa_4)(b_2a_4\ob_2)a_9(b_2a_3b_2)\beta_2\sigma_3 \sigma_2\sigma a_6 \nonumber \\ &=& a_3\beta_1 a_3b_1a_2a_1a_3b_1\sigma_1a_7a_3\beta_3\beta_4a_3b_2\beta_2\sigma_3 \sigma_2\sigma a_6
\end{array}
\end{equation}
where $\beta_1 =\oa_1\oa_2b_1a_2a_1$, $\beta_2 = a_5a_4b_2\oa_4\oa_5$, $\beta_3 = \oa_4b_2a_4 = b_2a_4\ob_2$ and $\beta_4 = \ob_2a_9b_2$.
\begin{figure}[hbt]
\begin{center}
    \includegraphics[width=18cm]{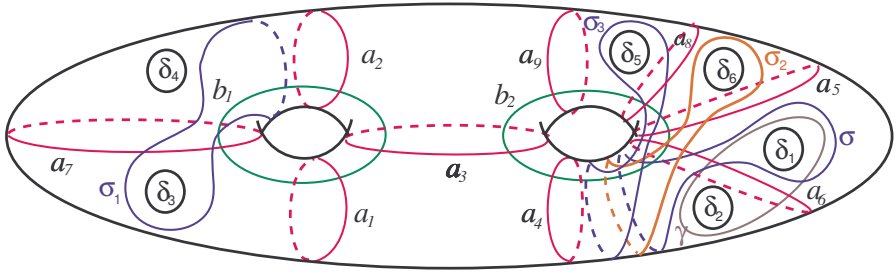}
  \caption{$\Sigma_{2,6}$}
  \label{f9}
\end{center}
 \end{figure}
\section{Final Remarks}
\par  Here, we would like to discuss the total number of disjoint sections that $\CP\#13\CPb \rightarrow S^{2}$ can admit. Let us observe the following:
\begin{lemma}A genus $2$ Lefschetz fibration $\CP\#13\CPb \rightarrow S^{2}$ may admit at most $12$ disjoint sections.
\end{lemma}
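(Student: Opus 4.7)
The plan is to force a contradiction from the unimodular structure of $H_{2}(X;\mathbb{Z})\cong \langle 1\rangle\oplus 13\langle -1\rangle$ for $X=\CP\#13\CPb$, paired against the intersection constraints satisfied by disjoint sections and the fiber.

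I would first record that any section $\sigma$ of $f$ is a smoothly embedded $2$-sphere $S$ with $[S]\cdot[F]=1$, and that distinct disjoint sections are orthogonal in $H_{2}$. The main obstacle is the preliminary step of showing that each section has self-intersection $-1$: our fibration is relatively minimal (none of the vanishing cycles $c_{1},\ldots,c_{5}$ is null-homotopic on $\Sigma_{2}$), and for a nontrivial relatively minimal genus $g\geq 2$ Lefschetz fibration over $S^{2}$ every section is a $(-1)$-sphere, by standard results on sections of Lefschetz fibrations that I would invoke rather than reprove.

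I would then argue by contradiction, assuming that $n=13$ pairwise disjoint sections $S_{1},\ldots,S_{13}$ exist. The sublattice $L=\mathrm{span}_{\mathbb{Z}}([S_{1}],\ldots,[S_{13}])\subset H_{2}(X;\mathbb{Z})$ has Gram matrix $-I_{13}$ and is therefore unimodular; primitivity is immediate, since for any $\alpha=\sum c_{j}[S_{j}]\in H_{2}(X;\mathbb{Z})$ one has $c_{j}=-\alpha\cdot[S_{j}]\in\mathbb{Z}$. This yields an orthogonal direct-sum splitting $H_{2}(X;\mathbb{Z})=L\oplus L^{\perp}$, whence $L^{\perp}$ is positive-definite unimodular of rank $1$, i.e.\ $L^{\perp}\cong\langle 1\rangle=\mathbb{Z}v$ with $v^{2}=1$.

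Finally I would examine the integral class $w:=[F]+\sum_{i=1}^{13}[S_{i}]$. Pairing with each $[S_{j}]$ shows $w\cdot[S_{j}]=1+(-1)=0$, so $w\in L^{\perp}$, forcing $w=cv$ for some $c\in\mathbb{Z}$ and hence $w^{2}=c^{2}$. On the other hand, direct computation gives $w^{2}=[F]^{2}+2\cdot 13+\sum_{i}[S_{i}]^{2}=0+26-13=13$, so $c^{2}=13$, which is impossible over $\mathbb{Z}$. This contradiction yields $n\leq 12$, as claimed.
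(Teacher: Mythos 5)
Your argument is the paper's argument in homological form: the class $w=[F]+\sum_{i}[S_{i}]$ is precisely the class of the blown-down fiber, and in both versions the contradiction is that $13$ is not a perfect square and hence cannot be the self-intersection of a class in the rank-one positive-definite unimodular lattice that remains. Your lattice bookkeeping (the splitting $H_{2}=L\oplus L^{\perp}$, $L^{\perp}\cong\langle 1\rangle$, $w^{2}=13$) is correct. The point worth flagging is the step you yourself single out: that every section has self-intersection exactly $-1$. The general statement you invoke --- that every section of a nontrivial relatively minimal genus $\geq 2$ Lefschetz fibration over $S^{2}$ is a $(-1)$-sphere --- is false; for instance, the fiber sum of two copies of the Horikawa fibration $(c_1c_2c_3c_4)^{10}=\delta_1$ is relatively minimal and carries a section of square $-2$ obtained by sewing two $(-1)$-sections. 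What is standard is only that sections of such fibrations have \emph{negative} square. And the exact value $-1$ is load-bearing: if one section were allowed to have square $-2$, the determinant of the Gram matrix of $[F],[S_1],\ldots,[S_{13}]$ can be a perfect square (twelve squares $-1$ and one square $-2$ give $|\det|=25$), so neither your lattice argument nor the blow-down survives. To be fair, the paper asserts the $(-1)$ property with no justification whatsoever, so you are not worse off --- you correctly isolate the crux --- but closing the gap requires a fibration-specific argument (e.g.\ via the possible lifts of the monodromy factorization to $\G_{2,1}$, or adjunction together with the explicit classes of $[F]$ and the canonical class in $\CP\#13\CPb$) rather than an appeal to a general result.
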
 
\begin{proof} Suppose $\CP\#13\CPb \rightarrow S^{2}$ admits $13$ disjoint sections. Each section is a sphere with self intersection $-1$. Furthermore,
each section intersects a regular fiber, a genus $2$ surface $\Sigma_{2}$ with self intersection $0$, at one point. Now, by blowing down all $-1$ spheres, we get a genus $2$ surface $\widetilde{\Sigma}_{2}$ with self intersection $13$ which can not exist in a manifold with second homology $\mathbb{Z}$.
\end{proof}
In Section 4, we find relations giving $n$ disjoint sections for genus $2$ Lefschetz fibration $\CP\#13\CPb \rightarrow S^{2}$, for $n =1, \ldots,6$. We want to remark that the technique we applied in Section 4 stops at $n = 6$. However, we can find relations in the corresponding mapping class group that give $n = 7, 8$ disjoint sections for genus $2$ Lefschetz fibration $\CP\#13\CPb \rightarrow S^{2}$ using results given in \cite{KO}. Below we indicate how to derive these relations. This method does not go further either. It remains unknown whether or not there are more than eight sections. 
\par Seven-holed torus relation given in \cite{KO} sits in $\Sigma_{2,7}$. See Figure ~ \ref{f10}. We identify the boundary curves $(\delta_1, \delta_2,\delta_3, \delta_4, \delta_5, \delta_6, \delta_7)$ in $\Sigma_{1,7}$ in Figure ~ \ref{f10} with the curves $(\delta_6, \delta_5, a_2, a_1, \delta_2, \delta_1, \delta_7)$ in $\Sigma_{2,7}$. Seven-holed torus relation gives 
\begin{equation*}
a_1 a_2\delta_1\delta_2\delta_5\delta_6\delta_7 = a_3a_4a_9b_2\sigma_5a_{10}\beta_5\sigma_3\sigma_6a_5\beta_3\sigma_4
\end{equation*}
where $\beta_3 = a_3b_2\overline{a}_3$, $\beta_5 = a_6b_2\overline{a}_6$.
\begin{figure}[h!]
 \includegraphics[width=18cm]{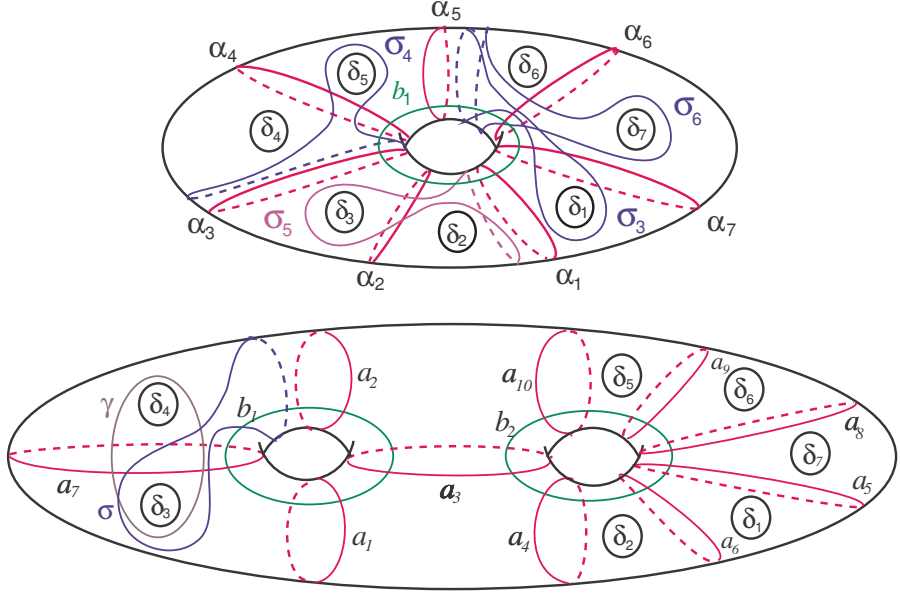}
  \caption{Seven-holed torus relation: $\delta_1\delta_2\delta_3\delta_4\delta_5\delta_6\delta_7 = \a_3\a_4\a_1b_1\sigma_5\a_2\beta_5\sigma_3\sigma_6\a_6\beta_3\sigma_4$ where $\beta_3 = \a_3b_1\overline{\a}_3$, $\beta_5 = \a_5b_1\overline{\a}_5$ and $\Sigma_{2,7}$. Identify $(\delta_1, \delta_2,\delta_3, \delta_4, \delta_5, \delta_6, \delta_7)$ in $\Sigma_{1,7}$ with $(\delta_6, \delta_5, a_2, a_1, \delta_2, \delta_1, \delta_7)$ in $\Sigma_{2,7}$.}
  \label{f10}
 \end{figure}

Next, we combine this relation with the lantern relation $a_1 a_2\delta_3\delta_4= \gamma\sigma a_7$ for the sphere with four boundary curves $\delta_3, \delta_4, a_1, a_2$ in $\Sigma_{2,7}$. Notice the star relation $a_4a_{10}\gamma = (a_1a_2a_3b_1)^3$ for torus with three boundary curves $\gamma, a_4, a_{10}$. We replace $\gamma = \oa_4\oa_{10}(a_1a_2a_3b_1)^3$ in the lantern relation. We simplify the equation and we write the product of right handed Dehn twists along the boundary curves $\delta_1,\ldots, \delta_7$ as a product of twenty right handed Dehn twists along non-boundary parallel simple closed curves on $\Sigma_{2,7}$. 
\begin{equation}
\begin{array}{ccl}
\delta_1\delta_2\delta_3\delta_4\delta_5\delta_6\delta_7 &=& a_3a_4a_9b_2\sigma_5a_{10}\beta_5\sigma_3\sigma_6a_5\beta_3\sigma_4\oa_1\oa_2\oa_1\oa_2\gamma\sigma a_7 \nonumber \\ &=& a_3a_9b_2\sigma_5a_{10}\beta_5\sigma_3\sigma_6a_5\beta_3\sigma_4(a_4)\oa_1\oa_2\oa_1\oa_2\oa_4\oa_{10}(a_1a_2a_3b_1)^3\sigma a_7 \nonumber \\ 
&=& a_3a_9b_2\sigma_5a_{10}\beta_5\sigma_3\sigma_6a_5\beta_3\sigma_4\oa_1\oa_2a_3b_1(a_1a_2a_3b_1)^2(\oa_{10})\sigma a_7 \nonumber \\ 
&=& a_3a_9(\oa_{10})b_2\sigma_5a_{10}\beta_5\sigma_3\sigma_6a_5\beta_3\sigma_4a_3(\oa_1\oa_2b_1a_1a_2)a_3b_1(a_1a_2a_3b_1)\sigma a_7 \nonumber \\
&=& a_3a_9(\oa_{10}b_2a_{10})(\oa_{10}\sigma_5a_{10})\beta_5\sigma_3\sigma_6a_5\beta_3\sigma_4a_3\tilde{\beta}a_3b_1(a_1a_2a_3b_1)\sigma a_7 \nonumber \\ &=& a_3a_9\tilde{\beta_1}\tilde{\beta_2}\beta_5\sigma_3\sigma_6a_5\beta_3\sigma_4a_3\tilde{\beta}a_3b_1(a_1a_2a_3b_1)\sigma a_7
\end{array}
\end{equation}
where $\tilde{\beta_1} =\oa_{10}b_2a_{10}$, $\tilde{\beta_2} = \oa_{10}\sigma_5a_{10}$, $\beta_5 = a_6b_2\overline{a}_6$, $\beta_3  =  a_3b_2\overline{a}_3$ and $\tilde{\beta} = \oa_1\oa_2b_1a_1a_2$. Note that the simple closed curves $\sigma_5$ and $a_{10}$ intersect at $2$ points.
\par Similarly, the eight-holed torus relation given in \cite{KO} sits in $\Sigma_{2,8}$. By applying the same technique, we also get the necessary relation for $n =8$. We identify the boundary curves $(\delta_1, \delta_2,\delta_3, \delta_4, \delta_5, \delta_6, \delta_7, \delta_8)$ in $\Sigma_{1,8}$ in Figure ~ \ref{f12} with the curves $(\delta_1, \delta_8, \delta_7, \delta_6, \delta_5, a_2, a_1, \delta_2)$ in $\Sigma_{2,8}$. Eight-holed torus relation gives 
\begin{equation*}
a_1 a_2\delta_1\delta_2\delta_5\delta_6\delta_7\delta_8 = a_{10}b_2\sigma_5a_{11}\beta_1\sigma_3\sigma_6a_{8}\beta_6\sigma_4\sigma_7a_4
\end{equation*}
where $\beta_1 = a_{5}b_2\overline{a}_{5}$, $\beta_6 = a_3b_2\overline{a}_3$. Then, we combine this relation with the lantern relation $a_1 a_2\delta_3\delta_4= \gamma\sigma a_7$ for the sphere with four boundary curves $\delta_3, \delta_4, a_1, a_2$ in $\Sigma_{2,8}$. Next, using the star relation $a_4a_{11}\gamma = (a_1a_2a_3b_1)^3$ for torus with three boundary curves $\gamma, a_4, a_{11}$, we replace $\gamma = \oa_4\oa_{11}(a_1a_2a_3b_1)^3$ in the lantern relation. We simplify the equation as

\begin{equation}
\begin{array}{ccl}
\delta_1\delta_2\delta_3\delta_4\delta_5\delta_6\delta_7\delta_8 &=& a_{10}b_2\sigma_5a_{11}\beta_1\sigma_3\sigma_6a_{8}\beta_6\sigma_4\sigma_7a_4\oa_1\oa_2\oa_1\oa_2\gamma\sigma a_7 \nonumber \\ &=& a_{10}b_2\sigma_5a_{11}\beta_1\sigma_3\sigma_6a_{8}\beta_6\sigma_4\sigma_7a_4\oa_1\oa_2\oa_1\oa_2\oa_4\oa_{11}(a_1a_2a_3b_1)^3\sigma a_7 \nonumber \\ 
&=& a_{10}b_2\sigma_5a_{11}\beta_1\sigma_3\sigma_6a_{8}\beta_6\sigma_4\sigma_7\oa_1\oa_2a_3b_1(a_1a_2a_3b_1)^2(\oa_{11})\sigma a_7 \nonumber \\ 
&=& a_{10}(\oa_{11})b_2\sigma_5a_{11}\beta_1\sigma_3\sigma_6a_{8}\beta_6\sigma_4\sigma_7a_3(\oa_1\oa_2b_1a_1a_2)a_3b_1(a_1a_2a_3b_1)\sigma a_7 \nonumber \\ &=& a_{10}(\oa_{11}b_2a_{11})(\oa_{11}\sigma_5a_{11})\beta_1\sigma_3\sigma_6a_{8}\beta_6\sigma_4\sigma_7a_3\tilde{\beta}a_3b_1(a_1a_2a_3b_1)\sigma a_7 \nonumber \\
&=& a_{10}\tilde{\beta_1}\tilde{\beta_2}\beta_1\sigma_3\sigma_6a_{8}\beta_6\sigma_4\sigma_7a_3\tilde{\beta}a_3b_1(a_1a_2a_3b_1)\sigma a_7
\end{array}
\end{equation}
where $\beta_1 = a_5b_2\oa_5$, $\beta_6 = a_3b_2\oa_3$, $\tilde{\beta} = \oa_1\oa_2b_1a_1a_2$, $\tilde{\beta_1} = \oa_{11}\b_2a_{11}$ and $\tilde{\beta_2} = \oa_{11}\sigma_5a_{11}$. Note that the simple closed curves $\sigma_5$ and $a_{11}$ intersect at $2$ points.

\begin{figure}[hbt]
\begin{center}
    \includegraphics[width=17cm]{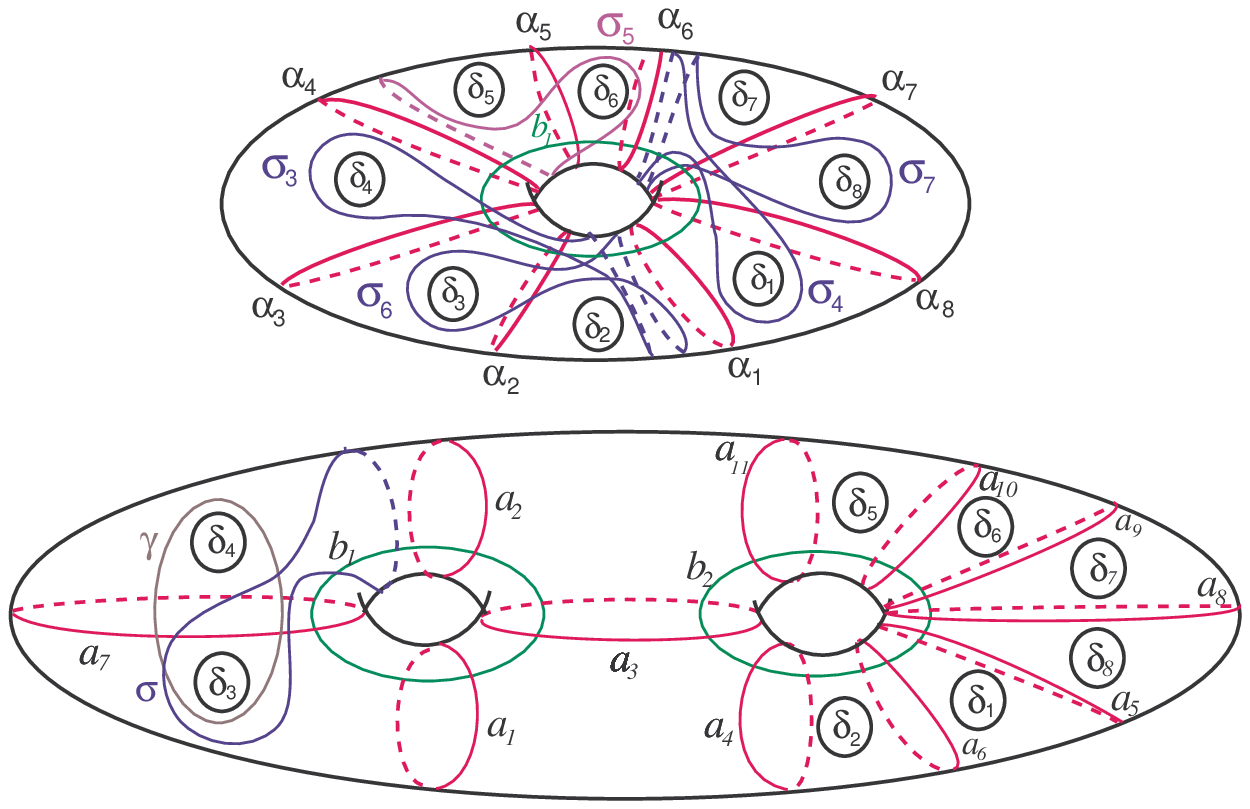}
 \caption{Eight-holed torus relation: $\delta_1\delta_2\delta_3\delta_4\delta_5\delta_6\delta_7\delta_8 = \a_4\a_5\beta_1\sigma_3\sigma_6\a_2\beta_6\sigma_4\sigma_7\a_7\beta_4\sigma_5 = \a_4b_1\sigma_5\a_5\beta_1\sigma_3\sigma_6\a_2\beta_6\sigma_4\sigma_7\a_7$ where $\beta_1=\a_1b_1\overline{\a}_1$, $\beta_4=\a_4b_1\overline{\a}_4$, $\beta_6=\a_6b_1\overline{\a}_6$ and $\Sigma_{2,8}$. Identify $(\delta_1, \delta_2,\delta_3, \delta_4, \delta_5, \delta_6, \delta_7, \delta_8)$ in $\Sigma_{1,8}$ with $(\delta_1, \delta_8, \delta_7, \delta_6, \delta_5, a_2, a_1, \delta_2)$ in $\Sigma_{2,8}$.}
\label{f12}
\end{center}
 \end{figure} 
 
\par By the above lemma, for $n > 12$ there is no relation in the mapping class group $\G_{2,n}$ inducing a genus $2$ Lefschetz fibration $\CP\#13\CPb \rightarrow S^{2}$ with $n$ disjoint sections. We find relations for $n =1, \ldots,8$ giving $n$ disjoint sections for genus $2$ Lefschetz fibration $\CP\#13\CPb \rightarrow S^{2}$. As a consequence by using a result of K. Chakiris we observe: 
 
\begin{corollary}Any genus $2$ holomorphic Lefschetz fibration without separating singular fibers admits a section.
\end{corollary}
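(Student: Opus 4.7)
The plan is to combine a theorem of K.\ Chakiris with the section constructions already built in the paper. Chakiris classified holomorphic genus $2$ Lefschetz fibrations over $S^{2}$ by showing that if such a fibration has no separating vanishing cycles, then its monodromy factorization in $\G_{2}$ is Hurwitz-equivalent to a product of conjugates of the three standard words
$W_{1}=(c_{1}c_{2}c_{3}c_{4}c_{5}^{2}c_{4}c_{3}c_{2}c_{1})^{2}$, $W_{2}=(c_{1}c_{2}c_{3}c_{4}c_{5})^{6}$, and $W_{3}=(c_{1}c_{2}c_{3}c_{4})^{10}$, each of which equals $1$ in $\G_{2}$. Granting this, the task reduces to exhibiting a section for each of these three building blocks and checking that sections behave well under Hurwitz moves and conjugation.

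For each building block I would produce a lift of the relation to $\G_{2,1}$ of the form $W_{j}=\delta^{m_{j}}$ with $m_{j}\geq 1$, since by the criterion recalled in Section 3 this is exactly what is needed to produce a section. The two chain relations of Section 2 give $W_{3}=\delta_{1}$ in $\G_{2,1}$ and $W_{2}=\delta_{1}\delta_{2}$ in $\G_{2,2}$, and the latter becomes $W_{2}=\delta$ in $\G_{2,1}$ after capping off one boundary. The Matsumoto word $W_{1}$ is already handled by Relation $(1)$ of Section 4.1, which writes $\delta_{1}$ as the required product of twenty right handed Dehn twists. I would then use the centrality of $\delta$ in $\G_{2,1}$ together with the surjectivity of the capping map $\G_{2,1}\to\G_{2}$ to argue that conjugation by an arbitrary $g\in\G_{2,1}$ and Hurwitz moves performed downstairs both lift upstairs without changing the right hand side. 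Assembling the conjugated lifts, the full monodromy word lifts to a factorization in $\G_{2,1}$ whose overall value is $\delta^{m}$ for some $m\geq 1$, which is precisely the condition for the fibration to admit a section.

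The main obstacle is the careful invocation of Chakiris' result: extracting from his classification the precise statement that in the no-separating case the monodromy factorization is Hurwitz-equivalent to a product of conjugates of $W_{1},W_{2},W_{3}$, and verifying that the equivalence he obtains really is Hurwitz equivalence (rather than some coarser notion that could forget the section data). Once this is granted, the remaining work amounts to bookkeeping using the three lifts already produced in Sections 2 and 4.1 of the paper and the standard monodromy/section dictionary.
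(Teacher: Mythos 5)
Your proposal is correct and follows essentially the same route as the paper: both rest on Chakiris' decomposition of such fibrations into (twisted) fiber sums of the three standard genus $2$ fibrations, together with the explicit lifts of the three monodromy words to products of boundary twists ($W_3=\delta_1$ and $W_2=\delta_1\delta_2$ from the chain relations, and $W_1=\delta_1$ from Section 4.1). The only difference is one of presentation: you make explicit, via the centrality of $\delta$ in $\G_{2,1}$ and the surjectivity of $\G_{2,1}\to\G_{2}$, the compatibility of sections with conjugation and Hurwitz moves that the paper compresses into the phrase ``sewing the sections during the fiber sum operation.''
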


 \begin{proof}In \cite{C}, it was shown that any genus $2$ holomorphic Lefschetz fibration without separating singular fibers is obtained by fiber summing the three genus $2$ Lefschetz fibrations given by three relations  
$(c_1c_2c_3c_4c_5^2c_4c_3c_2c_1)^2 = 1, (c_1c_2c_3c_4c_5)^6 = 1, (c_1c_2c_3c_4)^{10} = 1$ in $\G_{2}$ where $c_1, \ldots, c_5$ are simple closed curves shown in Figure ~ \ref{f3}. As mentioned in Section 2, each relation gives us a genus $2$ Lefschetz fibration with total spaces $\CP\#13\CPb, K3\#2\CPb$ and the Horikawa surface $H$, respectively. For Lefschetz fibrations with total spaces $K3\#2\CPb$ and $H$, it is known that they have sections. The relation $(c_1c_2c_3c_4c_5)^6 = \delta_1\delta_2$ in $\G_{2,2}$ gives 2 disjoint sections for Lefschetz fibration $K3\#2\CPb \rightarrow S^{2}$ and the relation $(c_1c_2c_3c_4)^{10} = \delta_1$ in $\G_{2,1}$ gives a section for Lefschetz fibration $H \rightarrow S^{2}$. See Figure ~ \ref{f2}. In this note we provide sections for genus $2$ Lefschetz fibration $\CP\#13\CPb \rightarrow S^{2}$. By sewing the sections during the fiber sum operation, we get a section for any genus $2$ holomorphic Lefschetz fibration without separating singular fibers.
 \end{proof}
\begin{remark} We would like to point out that one may continue and try to write similar relations for $9 \leq n \leq 12$ to see the exact number of disjoint sections that $\CP\#13\CPb \rightarrow S^{2}$ can admit. One can also try to find the exact number of disjoint sections of the genus $2$ Lefschetz fibrations with total spaces $K3\#2\CPb$ and $H$, respectively. Finally, we want to remark that it is still not known whether every genus $g$ Lefschetz fibration over $S^{2}$ admits a section or not. In particular, one may also try to show every genus $g$ holomorphic Lefschetz fibration over $S^{2}$ admits a section or not.
\end{remark} 
\textbf{Acknowledgments.} I am grateful to Anar Akhmedov for his suggestions and his interest in this work. I would like to thank Kenneth L. Baker, John B. Etnyre and Mustafa Korkmaz for helpful comments on the paper. The author is supported in part by TUBITAK-2214.

\end{document}